\definecolor{brass}{rgb}{0.71, 0.65, 0.36}
\theoremstyle{plain}
\newtheorem{theorem}{Theorem}[section]
\newtheorem{lemma}[theorem]{Lemma}
\theoremstyle{definition}
\newtheorem{remark}[theorem]{Remark}
\newtheorem{definition}[theorem]{Definition}
\numberwithin{equation}{section}
\def\Hr{\mathbb{H}}
\def\Rr{\mathbb{R}}
\def\sup{\operatorname{sup}}
\def\inf{\operatorname{inf}}
\def\sup{\operatorname{sup}}
\theoremstyle{plain}
\numberwithin{equation}{section}
\begin{document}

\title[Stability of the Rigidity of PMT for Static QL Energy]{Stability of Positive Mass Theorem for Static Quasi-Local Energy of Compact (Locally) Hyperbolic Graphical Manifolds}
%{A Quasi-Local Mass with Quasi-Local Proof of Positivity}

\author[Alaee]{Aghil Alaee}
\address{%\parbox{\linewidth}{Aghil Alaee\\
Department of Mathematics, Clark University, Worcester, MA 01610, USA}
\email{aalaeekhangha@clarku.edu}

\author[Liu]{Jiusen Liu}
\address{Department of Mathematics, Universit\"at T\"ubingen,  72076 T\"{u}bingen, Germany.}
\email{jiusen.liu@student.uni-tuebingen.de }

%\date{\today}

\thanks{A. Alaee acknowledges the support of NSF Grant DMS-2316965.}

\begin{abstract}In this paper, we consider compact graphical manifolds with boundary over (locally) hyperbolic static space. We prove the stability of the positive mass theorem with respect to the Federer--Fleming flat distance for the static quasi-local Brown-York energy of the outer boundary of compact (locally) hyperbolic graphical manifolds.
\end{abstract}

\maketitle
%\tableofcontents

\section{Introduction}
\label{sec1} \setcounter{equation}{0}
\setcounter{section}{1}
Is there a universal definition for quasi-local energy of a finite spatial region of spacetimes? This is an open problem in mathematical general relativity, which was raised initially by Penrose \cite{PenroseR}. Similar to the definition of the total energy of asymptotically flat spacetime in general relativity, the Hamilton-Jacobi methods are used to define quasi-local energies \cite{AlaeeKhuriYau,BrownYork,HH,Tsang,WangYau}; see also survey paper \cite{Szabados}. A primary property of a quasi-local energy, inspired by the total energy \cite{SchoenYau79,Witten}, is the positivity and rigidity statement which is called the positive mass theorem. The positive mass theorem for quasi-local energies, derived from the Hamilton-Jacobi methods, is a reference dependent statement. If the reference is the Minkowski spacetime, then the positivity property claims that the quasi-local energy of a finite region of spacetime, satisfying the dominant energy condition, should be non-negative and the rigidity holds if and only if the domain arises from the Minkowski spacetime. Similarly, if the reference has a non-zero total energy, e.g., Schwarzschild and dS/AdS Schwarzschild, it is conjectured that the mass of reference should appear in the inequality of the positive mass theorem. See \cite{AlaeeKhuriKunduri,AlaeeKhuriYau3,AlaeeKhuriYau4} for recent results.

A relevant question is the stability of the rigidity statement of the positive mass theorem: If the quasi-local energy is close to the rigidity case, is the region close to a region of the reference spacetime in some topologies? The global version of this question, concerning the total energy of asymptotically flat and hyperbolic Riemannian manifolds, has been studied extensively in recent years. Some of these studies include \cite{Allen1, BKS19, DahlSakovichGicquaud, HuangLee, HLS, LeeSormani, S-S}, with additional insights provided in the review paper \cite{Sormanireview} and the references therein. However, for quasi-local energies, this question has only been studied in the context of the rigidity of the positive mass theorem for the Brown-York quasi-local energy \cite{ShiTam02}, by the first author, Cabrera~Pacheco, and McCormick \cite{AlaeepachecoMccormick}. It is desirable to extend this investigation to other quasi-local energies in different settings.

A natural generalization of the Brown-York quasi-local energy is to change the reference spacetime from the Euclidean space arising from Minkowski spacetime to a static manifold arising as a constant-time slice of a static spacetime. Recall that static spacetime is a solution of the Einstein equations with a hypersurface orthogonal timelike Killing vector field. A static manifold is a constant-time slice of static spacetime, and it is a Riemannian manifold $(\mathbb{M},b)$ with a positive potential function $V$ that is a solution of Einstein static equation
\begin{equation}\label{ofnoinohiw}
(\Delta_{b} V) b-\text{Hess}_b V+V\text{Ric}(b)=0,\qquad \Delta_bV+\Lambda V=0\,,
\end{equation}where $\Delta_{b}$, $\text{Hess}_b$, and $\text{Ric}(b)$ are Laplacian, hessian, and Ricci curvature of the metric $b$ and $\Lambda$ is the cosmological constant.

The static Brown-York energy of a closed surface $\Sigma$, which arises as a smooth boundary $\Sigma=\partial\Omega$ of a compact Riemannian manifold $(\Omega^n,g)$,  with respect to static Riemannian manifold $(\mathbb{M},b)$ is 
\begin{equation}
m_{BY}^S(\Sigma)=\frac{1}{(n-1)\mathcal{A}_{n-1}}\int_{\Sigma}V\left(H_0-H\right) dA,
\end{equation}
where $H$ is the mean curvature of $\Sigma$ with respect to metric $g$, $H_0$ is the mean curvature of the isometric embedding surface $\Sigma\hookrightarrow (\mathbb{M},b)$, $dA$ is the volume element of $\Sigma$, and $\mathcal{A}_{n-1}$ is the volume of space-form metric of constant sectional curvature on $\Sigma$. In this paper, the static reference manifold for the static Brown-York energy is the Kottler space $(\mathbb{M}_\epsilon,b_\epsilon=V_\epsilon^2dr^2+r^2h_\epsilon)$ for $\epsilon=0,\pm1$, where $V_\epsilon=\sqrt{r^2+\epsilon}$ and $h_\epsilon$ is space form metric with constant sectional curvature $\epsilon$ on topological $\Sigma$,  which is obtained from a constant time slice of Kottler spacetime. The Kottler space has constant sectional curvature $-1$.  In three dimensions, assuming the Gauss curvature of topological sphere $\Sigma$ to be larger than minus one, there exists a unique isometric embedding in the hyperbolic space $(\mathbb{M}^n_1=\mathbb{H}^n,b_1=\left(r^2+1\right)^{-1}dr^2+r^2 g_{S^{n-1}})$ \cite{Pogorelov}. Therefore, this quasi-local energy is a quantity that can be used for mathematical results and physical applications. However, there is no such isometric embedding result in higher dimensions.

Consider a compact three-dimensional Riemannian manifold $(\Omega,g)$ with scalar curvature not less than minus six and smooth boundary $\partial\Omega=\Sigma$ with positive mean curvature and Gauss curvature larger than minus one, then Wang and Yau proved the positivity of the static Brown-York quasi-local energy of $\Sigma$ with respect to the hyperbolic space \cite{WangYau2007}. Moreover, Shi and Tam showed that the equality holds if and only if $(\Omega,g)$ is diffeomorphic to a compact domain in hyperbolic space \cite{ShiTam1}. In \cite[Theorem B' (iii),
Section 3.2.4]{Gromov}, Gromov showed that any metric on the 2-torus can be isometrically embedded in a scaled
Kottler manifold $(\mathbb{M}^n_0,b_0=r^{-2}dr^2+r^2 g_{T^{n-1}})$ with sufficiently large negative curvature. Recently, the first author, Hung, and Khuri proved a positive mass theorem for the static Brown-York mass of 2-torus with respect to Kottler space with $\epsilon=0$ \cite{AlaeeHungKhuri} based on a global result for asymptotically hyperbolic manifolds with toroidal infinity \cite
{AlaeeKhuriHung}. These results promote further investigation of the static Brown-York energy and, in particular, the stability of the rigidity of these positive mass theorems. 

In this paper, we study $(n-1)$-dimensional closed hypersurface $\Sigma$ as the outer boundary of compact graphical manifolds over static reference  $(\mathbb{M}_\epsilon,b_\epsilon)$ in the $(n+1)$-dimensional (universal cover) static manifold $(\mathbb{M}^{n+1}_\epsilon=\mathbb{R}\times\mathbb{M}^n_\epsilon,\bar{b}_\epsilon=V^2_\epsilon dt^2+b_\epsilon)$. To state our main theorem, we define the class of manifolds that we will investigate here.

\begin{definition}\label{def1}
    Assume $U_o\subset U$ are two bounded open sets in $\Hr^n$, with $U$ and $U_o$ are connected. A \emph{compact hyperbolic graphical manifold} $(\Omega^n,g)$ refer to a smooth submanifold of $(\Hr^{n+1},\bar{b}_1)$ with upward pointing mean curvature vector and smooth boundary $\partial \Omega=\Sigma\cup\Sigma_o$ that can be realized as a graph of a smooth function $f: \overline{U}\backslash U_o\to \Rr $ such that
    \begin{enumerate}
    \item $f$ has a finite range,
    \item every regular level set $\Sigma_h=\{(x,f(x))\in\mathbb{H}^{n+1}: x\in\overline{U}\backslash U_o,\,f(x)=h\}$ is star-shaped and outer-minimizing in hyperplane $\mathbb{H}^n$, and
    \item outer boundary $\Sigma=\{(x,f(x))\in \Hr^{n+1}:x\in\partial U\}$ is level set and inner boundary $\Sigma_o=\{(x,f(x))\in \Hr^{n+1}:x\in\partial U_o\}$ is minimal submanifold which $f$ is locally constant.
    \end{enumerate}
\end{definition} If, in addition, the scalar curvature $R(g)\geq -n(n-1)$, then every regular level set of compact hyperbolic graphical manifold $\Omega$ is strictly mean convex in $\mathbb{H}^n$, see \cite[Lemma 2.1]{CabreraPacheco19}. Next, we define the locally hyperbolic graphical manifolds as follows.
\begin{definition}\label{def2}Let $\epsilon=0,-1$. 
    Assume $U_o\subset U$ open sets in $\mathbb{M}_\epsilon^n$ with $U$ and $U_o$ are connected and $U_o$ includes cuspical end for $\epsilon=0$ and inner boundary for $\epsilon=-1$ in $\mathbb{M}_\epsilon^n$, such that $\overline{U}\backslash U_o$ is bounded. A \emph{compact locally hyperbolic graphical manifold} $(\Omega^n,g)$ refer to a smooth submanifold of $(\mathbb{M}_\epsilon^{n+1},\bar{b}_\epsilon)$ with smooth boundary $\partial \Omega=\Sigma\cup\Sigma_o$ that can be realized as a graph of a smooth function $f: \overline{U}\backslash U_o\to \Rr $ such that
    \begin{enumerate}
    \item $f$ has a finite range,
    \item every regular level set $\Sigma_h=\{(x,f(x))\in\mathbb{M}_\epsilon^{n+1}: x\in\overline{U}\backslash U_o,\,f(x)=h\}$ is star-shaped, mean convex, and outer-minimizing in hyperspace $\mathbb{M}_\epsilon^{n}$,
   % \item for $\epsilon=-1$, every level sets satisfy $\mathcal{A}_{n-1}^{-2}|\Sigma_h|^2\geq 1+\bar{c}$ for positive constant $\bar{c}>0$ independent of $\Sigma_h$.
    \item outer boundary $\Sigma=\{(x,f(x))\in \mathbb{M}_\epsilon^{n+1}:x\in\partial U\}$  is level set and inner boundary $\Sigma_o=\{(x,f(x))\in \Hr^{n+1}:x\in\partial U_o\}$ is  minimal submanifold which $f$ is locally constant.
    \end{enumerate}
\end{definition}
The main theorem is as follows.
\begin{theorem}\label{main result2}
Let $(\Omega_i,g_i)$ be a sequence of compact (locally) hyperbolic graphical manifolds with outer boundary $\Sigma_i$, as in Definition \ref{def1}-\ref{def2}, and the scalar curvature $R(g_i)\geq -n(n-1)$.  After an appropriate normalization, if $m_{BY}^S(\Sigma_i)\to 0$, then the sequence $\Omega_i$ converges to $\{0\}\times \left(\overline{U}\backslash U_o\right)$ in the sense of currents. Moreover, $\operatorname{vol}(\Omega_i)\to \operatorname{vol}(\overline{U}\backslash U_o)$.
\end{theorem}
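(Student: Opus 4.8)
The plan is to exploit the graphical structure to convert the smallness of the static Brown--York energy into an integral bound that forces the graph functions $f_i$ to have small gradient in an $L^1$-type sense, and then to upgrade this to convergence of the graphs as integral currents. I would begin by writing down, for a compact (locally) hyperbolic graphical manifold over the reference $(\mathbb{M}_\epsilon,b_\epsilon)$, an explicit formula for $m_{BY}^S(\Sigma)$ in terms of the graph function $f$ and the level-set geometry. Because every regular level set $\Sigma_h$ is star-shaped, mean convex, and outer-minimizing in $\mathbb{M}_\epsilon^n$, the Shi--Tam/Wang--Yau monotonicity philosophy applies: one foliates $\overline{U}\setminus U_o$ by the level sets $\Sigma_h$ and establishes that the quantity $\int_{\Sigma_h} V_\epsilon (H_0 - H)\,dA$ is monotone along the foliation, with the boundary term at $\Sigma$ controlled by $m_{BY}^S(\Sigma)$ and the interior behavior controlled by the minimal inner boundary $\Sigma_o$. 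The scalar curvature hypothesis $R(g)\geq -n(n-1)$ is exactly what makes each level set strictly mean convex (by the cited \cite[Lemma 2.1]{CabreraPacheco19}) and drives the sign in the monotonicity.

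Next I would extract a pointwise/integral gradient estimate. The key identity, as in the asymptotically hyperbolic graphical setting, is that the difference $H_0 - H$ of mean curvatures of the graph versus its projection, weighted by the potential $V_\epsilon$, controls $\frac{|\nabla f|^2}{\sqrt{1+V_\epsilon^{-2}|\nabla f|^2}}$ (or the analogous expression dictated by the warped metric $\bar b_\epsilon = V_\epsilon^2 dt^2 + b_\epsilon$) after integration by parts over $\overline U\setminus U_o$ and using the divergence structure of the mean curvature operator. Concretely, I expect an estimate of the schematic form
\begin{equation}\label{eq:gradbound}
\int_{\overline U\setminus U_o} \frac{|\nabla f_i|^2}{1+\sqrt{1+V_\epsilon^{-2}|\nabla f_i|^2}}\, V_\epsilon\, d\mu_{b_\epsilon} \;\leq\; C\, m_{BY}^S(\Sigma_i),
\end{equation}
so that $m_{BY}^S(\Sigma_i)\to 0$ forces the left-hand side to zero. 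Since $V_\epsilon$ is bounded below away from zero on the compact bounded region $\overline U\setminus U_o$, this yields $\nabla f_i \to 0$ in $L^1$, hence (after the normalization fixing the additive constant, e.g. pinning $f_i$ to $0$ on $\Sigma$ or on $\Sigma_o$ where it is locally constant) $f_i \to 0$ in $L^1(\overline U\setminus U_o)$.

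To promote $L^1$-convergence of the graph functions to convergence of the manifolds $\Omega_i$ to the flat slice $\{0\}\times(\overline U\setminus U_o)$ in the sense of currents, I would realize each $\Omega_i$ as the integral current $T_i$ given by the graph of $f_i$, pushed forward under $x\mapsto (x,f_i(x))$, and compare it to the current $T_0$ of the zero-graph. A standard computation bounds the flat distance between two graphs by the $L^1$ norm of the difference of the functions plus the $L^1$ norm of the difference of their gradients over the region; the monotone/outer-minimizing hypotheses keep the masses $\mathbf{M}(T_i)$ uniformly bounded (indeed $\mathbf{M}(\partial T_i)$ as well), so the sequence is precompact in the flat topology. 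The $L^1$ convergence from \eqref{eq:gradbound} then pins the flat limit to be exactly $T_0$, i.e.\ $\Omega_i \to \{0\}\times(\overline U\setminus U_o)$ as currents. The volume statement $\operatorname{vol}(\Omega_i)\to \operatorname{vol}(\overline U\setminus U_o)$ follows because $\operatorname{vol}(\Omega_i)=\int \sqrt{1+V_\epsilon^{-2}|\nabla f_i|^2}\, V_\epsilon\, d\mu_{b_\epsilon}$ and the integrand converges to $V_\epsilon\, d\mu_{b_\epsilon}$ in $L^1$ by the same gradient control.

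The main obstacle I anticipate is establishing the monotonicity and the resulting estimate \eqref{eq:gradbound} with a clean, sign-definite structure in the warped/static setting, rather than the flat or purely hyperbolic case: the potential $V_\epsilon=\sqrt{r^2+\epsilon}$ and the $r^2 h_\epsilon$ warping enter the mean curvature comparison $H_0-H$ nontrivially, and one must verify that the star-shaped, outer-minimizing, mean-convex hypotheses on the level sets are precisely strong enough to guarantee that the boundary contributions at the minimal inner boundary $\Sigma_o$ carry the correct (non-obstructing) sign. A secondary technical point is handling the three reference cases $\epsilon=0,-1$ (with cuspidal or minimal inner ends) and $\epsilon=+1$ (hyperbolic space, genuine inner boundary) uniformly, together with choosing the normalization so that the additive freedom in $f_i$ and any rescaling of the reference are fixed before passing to the flat limit.
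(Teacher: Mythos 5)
Your proposal correctly identifies the divergence structure that drives the proof (the level-set monotonicity coming from the Dahl--Gicquaud--Sakovich identity, which in the paper is equation \eqref{eq4}), but the central estimate you build on it --- the bulk bound \eqref{eq:gradbound} of the form $\int_{\overline U\setminus U_o}(\cdots)|\nabla f_i|\,d\mu_{b_\epsilon}\le C\,m_{BY}^S(\Sigma_i)$ --- does not hold, and the subsequent steps collapse with it. What the energy actually controls, uniformly in $h$, is the \emph{level-set} integral $\int_{\Sigma_h}V\frac{V^2|Df|^2}{1+V^2|Df|^2}\mathring H_{\Sigma_h}\,dA_h\le 2(n-1)\mathcal A_{n-1}\mathbf m$; converting this to a bulk $L^1$ gradient bound via the coarea formula necessarily introduces a factor of the height range $\max f-\min f$, which is not controlled by the hypotheses. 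Worse, the geometry genuinely obstructs your conclusion: $|Df|\to\infty$ at the minimal inner boundary $\Sigma_o$, and the graph may have an arbitrarily deep ``well'' over a region of small cross-sectional area, so neither $\nabla f_i\to 0$ in $L^1$ nor $f_i\to 0$ in $L^1$ holds in general, no matter how the additive constant is pinned. (The bulk quantity that \emph{is} bounded by $\mathbf m$ is $\int V(R+n(n-1))(1+V^2|Df|^2)^{-1/2}dx$, which carries no information about $|Df|$ when $R=-n(n-1)$.)

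The paper avoids this by following Huang--Lee rather than an $L^1$ argument: one defines a cutoff level $h_o$ as the supremum of heights whose level sets have volume $\lesssim\mathbf m^{1/n_\epsilon}$ (using the quasi-local Penrose inequality of Lemma \ref{lemma2.2} to seed this), proves an ODE comparison for the level-set volume function $\mathcal V(h)$ showing the graph rises only $O(\mathbf m^{1/2})$ (resp.\ $O(\mathbf m^{1/(n-2)}|\log\mathbf m|)$) above $h_o$ (Lemma \ref{heightbound}), and then bounds the flat distance to the slice $\{h_o\}\times(\overline U\setminus U_o)$ by decomposing the intervening region into four explicit currents $A_\pm,B_\pm$ whose masses are estimated separately (Theorem \ref{main estimates}). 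The portion of the graph below $h_o$ is never shown to be close to the slice pointwise or in $L^1$; it is simply shown to have small mass, via the isoperimetric inequality applied to the small level sets --- which is all the Federer--Fleming flat distance requires. If you want to repair your argument, you would need to replace the $L^1$ gradient/Poincar\'e step with this cut-and-estimate-masses strategy; the flat-distance-by-$L^1$-difference comparison you invoke is the step that cannot be supplied.
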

The proof of this theorem is based on \cite{AlaeepachecoMccormick} and \cite{HuangLee} that is tailored to our setting. This paper is outlined as follows. In Section 2, we study the static Brown-York energy of compact hyperbolic graphs. In Section 3, we establish a volume estimate which leads to convergence of volume in our theorem. In Section 4, we prove an estimate for flat distance in terms of quasi-local energy and prove the main theorem.

\section{Static Brown-York Quasi-local energy}
In this section, we study the static Brown-York quasi-local energy of $\Sigma$ with respect to $(\mathbb{M}^n_\epsilon,b_\epsilon)$, where $\Sigma$ is the outer boundary of a compact (locally) hyperbolic graphical manifold $(\Omega,g)$. Let $\nabla$ and $D$ denote the Levi-Civita connection with respect to $(\Omega,g)$ and static reference $(\mathbb{M}^n_\epsilon,b_\epsilon)$, respectively. Moreover, let $\mathring{H}_{\Sigma_h}$ and ${H}_{\Sigma_h}$ represent the mean curvature of level set $\Sigma_h$ in static reference $(\mathbb{M}^n_\epsilon,b_\epsilon)$ and $\Omega$, respectively. We find a lower bound for the static Brown-York mass of $\Sigma$.

\begin{lemma}\label{lemma2.1} Let $(\Omega,g)$ be a compact (locally) hyperbolic graphical manifold, as in Definition \ref{def1} (resp., Definition \ref{def2}), with scalar curvature $R(g)\geq -n(n-1)$ with outer boundary $\Sigma$. Then 
\begin{equation}\label{lower.1}
m_{BY}^{S}(\Sigma)\geq \frac{1}{2(n-1)\mathcal{A}_{n-1}}\int_{\Sigma}V_\epsilon\frac{V^2_\epsilon|Df|^2}{1+V^2_\epsilon|Df|^2}\mathring{H}_{\Sigma}\,dA\,,
\end{equation}
where $dA$ is the volume element of the level set $\Sigma$. If $\Sigma_o$ is a level set and not a minimal surface, then the equality holds if and only if $(\Omega,g)$ is isometric to a compact domain in  $\mathbb{M}^n_\epsilon$ for $\epsilon=0,\pm 1$.
\end{lemma}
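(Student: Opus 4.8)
The plan is to compute the static Brown--York energy directly by exploiting the graphical structure of $\Omega$ and comparing the mean curvature $H_\Sigma$ of the outer boundary inside $(\Omega,g)$ with the mean curvature $\mathring H_\Sigma$ of the isometrically embedded level set in the static reference $(\mathbb{M}^n_\epsilon,b_\epsilon)$. Since $\Sigma$ is realized as a level set $\{f = h\}$ of the defining graph function $f\colon \overline{U}\setminus U_o \to \mathbb{R}$, both the induced metric on $\Sigma$ and its second fundamental form can be written explicitly in terms of $f$, $Df$, and the reference geometry. The key algebraic fact I would establish first is a pointwise identity relating $H_\Sigma$ to $\mathring H_\Sigma$: because the graph sits in the warped product $\bar b_\epsilon = V_\epsilon^2 dt^2 + b_\epsilon$, the induced metric on $\Sigma$ agrees with the metric induced from $b_\epsilon$ on the projected level set (the graph map restricted to a level set is an isometry onto its image in $\mathbb{M}^n_\epsilon$), so the isometric embedding hypothesis is automatic and $H_0 = \mathring H_\Sigma$. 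The nontrivial point is that the ambient mean curvature $H_\Sigma$ picks up a factor depending on the gradient term, and I expect the clean relation
\begin{equation}\label{meancurvcomp}
H_\Sigma = \frac{\mathring H_\Sigma}{\sqrt{1 + V_\epsilon^2 |Df|^2}}
\end{equation}
up to lower-order terms involving the normal derivative of $f$; this is the geometric heart of the computation.

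Granting \eqref{meancurvcomp}, the integrand $V_\epsilon(\mathring H_\Sigma - H_\Sigma)$ becomes
\[
V_\epsilon\,\mathring H_\Sigma\left(1 - \frac{1}{\sqrt{1 + V_\epsilon^2|Df|^2}}\right),
\]
and the elementary inequality $1 - (1+x)^{-1/2} \geq \tfrac{1}{2}\,x/(1+x)$ for $x \geq 0$ (applied with $x = V_\epsilon^2|Df|^2$) yields exactly the claimed lower bound \eqref{lower.1} once $\mathring H_\Sigma \geq 0$ is known. That positivity is where I would invoke the structural hypotheses: by Definition~\ref{def1}(2) (resp.\ Definition~\ref{def2}(2)) the level sets are star-shaped and, by the remark following Definition~\ref{def1} together with the scalar curvature bound $R(g)\geq -n(n-1)$ and \cite[Lemma 2.1]{CabreraPacheco19}, every regular level set is strictly mean convex in $\mathbb{M}^n_\epsilon$, so $\mathring H_\Sigma > 0$. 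Thus the inequality follows termwise on $\Sigma$ and integrates to \eqref{lower.1}.

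For the rigidity statement, equality in \eqref{lower.1} forces equality in the pointwise inequality $1 - (1+x)^{-1/2} = \tfrac12 x/(1+x)$ wherever $V_\epsilon \mathring H_\Sigma > 0$, which holds only at $x = 0$, i.e.\ $|Df| \equiv 0$ on $\Sigma$. Because $\Sigma$ is an arbitrary regular level set in the graphical exhaustion and $f$ has finite range with the prescribed boundary behavior, $Df \equiv 0$ everywhere forces $f$ to be locally constant, so the graph degenerates to a horizontal slice $\{c\}\times(\overline{U}\setminus U_o)$, which is isometric to a compact domain in $(\mathbb{M}^n_\epsilon, b_\epsilon)$. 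The hypothesis that $\Sigma_o$ is a level set and \emph{not} minimal is what rules out the trivial/degenerate alternative and guarantees the graph function is genuinely constant rather than merely having vanishing tangential gradient on a single slice. The main obstacle I anticipate is establishing \eqref{meancurvcomp} rigorously: one must carefully compute the unit normal to the graph in the warped-product metric $\bar b_\epsilon$, express the second fundamental form, trace it against the induced metric, and correctly track how the potential $V_\epsilon$ enters through the warping factor. The star-shaped and outer-minimizing hypotheses should ensure the level-set decomposition is well defined and that the boundary terms behave, but the bookkeeping for the normal component of $\nabla f$ versus its tangential part along $\Sigma$ is the delicate step that determines whether the clean factor $\sqrt{1+V_\epsilon^2|Df|^2}$ emerges exactly.
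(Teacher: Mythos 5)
Your derivation of the inequality \eqref{lower.1} is correct and is essentially the paper's argument in different clothing: the elementary inequality $1-(1+x)^{-1/2}\geq \tfrac12\,x/(1+x)$ is precisely the paper's quadratic-completion step $\int_\Sigma \tfrac{V}{H_0}(H_0-H)^2\,dA\geq 0$ rewritten in the variable $x=V_\epsilon^2|Df|^2$, and your mean curvature comparison is the paper's identity \eqref{eq6}. Your hedging about ``lower-order terms involving the normal derivative of $f$'' is unnecessary: along a level set of $f$ the gradient $Df$ is purely normal, so $H_\Sigma=\mathring H_\Sigma/\sqrt{1+V_\epsilon^2|Df|^2}$ holds exactly (cf.\ \cite{DahlSakovichGicquaud}).

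The genuine gap is in the rigidity argument. Equality in \eqref{lower.1} yields $|Df|\equiv 0$ only on the outer boundary $\Sigma$, which is one fixed level set; your assertion that ``$\Sigma$ is an arbitrary regular level set in the graphical exhaustion'' is not available, since the equality hypothesis is imposed only at $\partial U$ and nothing in your argument transfers it to the interior level sets $\Sigma_h$. The missing ingredient --- and the only place the bound $R(g)\geq -n(n-1)$ enters the rigidity discussion --- is the divergence-structure identity of \cite[Lemma~3.2]{DahlSakovichGicquaud} (equation \eqref{eq4} of the paper): for regular values $h_1<h_2$,
\begin{equation*}
I(h_2)-I(h_1)=\int_{\mathcal{U}_{h_2}\setminus\mathcal{U}_{h_1}}\frac{V\left(R(g)+n(n-1)\right)}{\sqrt{1+V^2|Df|^2}}\,dx\geq 0,
\qquad
I(h):=\int_{\Sigma_h}V\frac{V^2|Df|^2}{1+V^2|Df|^2}\mathring H_{\Sigma_h}\,dA_h,
\end{equation*}
so the flux $I(h)$ is non-decreasing in $h$. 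Since it vanishes at the outer boundary by the equality case and is non-negative at every level by mean convexity, it vanishes identically, which is what forces $|Df|=0$ on every level set and hence $f$ constant. Without this monotonicity, your conclusion ``$Df\equiv 0$ everywhere'' does not follow. Relatedly, the hypothesis that $\Sigma_o$ is a level set and not minimal does not ``rule out a degenerate alternative'' in the way you describe: in Definitions \ref{def1}--\ref{def2} a minimal inner boundary corresponds to $|Df|\to\infty$ there, which is incompatible with $|Df|\equiv 0$ and in fact forces $m_{BY}^S(\Sigma)>0$ by Lemma \ref{lemma2.2}; the hypothesis removes that obstruction so that the rigidity case can occur at all.
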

\begin{proof}Let $V=V_{\epsilon}$, for any surface $\Sigma$ that can be isometrically embedded as a mean convex surface in any static reference, we can find a lower bound for the static Brown-York quasi-local energy using an elementary computation.
\begin{equation}\label{eq1}
m_{BY}^S(\Sigma)\geq \frac{1}{2(n-1)\mathcal{A}_{n-1}}\int_{\Sigma}{VH_o\left(1-\frac{H^2}{H_o^2} \right)}\,dA\,,
\end{equation}
where we used the following quadratic identity.
\begin{equation}\label{eq3}
0\leq \int_{\Sigma} \frac{V}{H_o}\left(H_o-H\right)^2\,dA=\int_{\Sigma}\left(2V\left(H_o-H\right)-V\left(H_o-\frac{H^2}{H_o}\right)\right)\,dA.  \end{equation}

A straightforward gives the relation of the mean curvature of any level set $\Sigma_h$ in static refrence and $\Omega$.
\begin{equation}    \label{eq6}  
H_{\Sigma_h}=\frac{1}{\sqrt{1+V^2|D f|^2}}\mathring{H}_{\Sigma_h}\,.
\end{equation}
We can rewrite the lower bound of \eqref{lower.1} this relation of the mean curvature. 
\begin{equation}\label{eq2}
\begin{split}
\int_{\Sigma_h}V\frac{V^2|Df|^2}{1+V^2|D f|^2}\mathring{H}_{\Sigma_h}\,dA_h&=\int_{\Sigma_h}V\left(1-\frac{1}{1+V^2|D f|^2} \right) \mathring{H}_{\Sigma_h}\,dA_h\\
&=\int_{\Sigma_h}V\left(1-\frac{\mathring{H}_{\Sigma_h}^2}{\mathring{H}_{\Sigma_h}^2\left(1+V^2|D f|\right)^2} \right)\mathring{H}_{\Sigma_h}\,dA_h\\      &=\int_{\Sigma_h}V\mathring{H}_{\Sigma_h}\left(1-\frac{H_{\Sigma_h}^2}{\mathring{H}_{\Sigma_h}^2}\right)\,dA_h\,,
\end{split}
\end{equation}
where $dA_h$ is the volume element of level set $\Sigma_h$. Now if $\Sigma$ is a level set, then combining  \eqref{eq1} and \eqref{eq2} for $\Sigma_h=\Sigma$, we obtain the inequality \eqref{lower.1}. 

If equality holds, then by \eqref{eq3} we have $H_{\Sigma}=\mathring{H}_{\Sigma}$ and $m_{BY}^S(\Sigma)=0$. Combining with mean convexity of $\Sigma$ and \eqref{lower.1}, we have $|Df|=0$ on $\Sigma$. On other hand,  by \cite[Lemma 3.2]{DahlSakovichGicquaud} for any regular values $h_1<h_2$, we have 
\begin{equation}\label{eq4}
\begin{split}
\int_{\mathcal{U}_{h_2}\backslash\,\mathcal{U}_{h_1}}\frac{V\left(R(g)+n\left(n-1\right)\right)}{\sqrt{1+V^2|Df|^2}}\,dx&= \int_{\Sigma_{h_2}}V\left(\frac{V^2|D f|^2}{1+V^2|D f|^2} \right)\mathring{H}_{\Sigma_{h_2}}\,dA_{h_2}\\&-
\int_{\Sigma_{h_1}}V\left(\frac{V^2|D f|^2}{1+V^2|Df|^2} \right)\mathring{H}_{\Sigma_{h_1}}\,dA_{h_1}\,,
\end{split}
\end{equation}
where $dx$ is the volume form with respect to $g$, $\mathcal{U}_{h_i}=\{x\in \overline{U}\backslash U_o: f(x)<h_i\}$ and $\Sigma_{h_i}$ is the reduced boundary $\partial^*\Omega_{h_i}$, which by the Sard's theorem it is precisely the level set $\{f=h_i\}$. Therefore, using $R(g)\geq-n(n-1)$ we have
\begin{equation}
0=\int_{\Sigma}V\frac{V^2|Df|^2}{1+V^2|Df|^2}\mathring{H}_{\Sigma}\,dA\geq \int_{\Sigma_h}V\frac{V^2|Df|^2}{1+V^2|Df|^2}\mathring{H}_{\Sigma_h}\,dA_h\,.
\end{equation} Since every level set is mean convex, we have $|Df|=0$ for all $\Sigma_h$. Therefore, $(\Omega,g)$ is isometric to a compact domain in the static reference.
\end{proof}
Next, using the Minkowski inequality \cite[Theorem 1.1, Theorem 2.3]{delimaGirao}, we prove a quasi-local Riemannian Penrose inequality for compact (locally) hyperbolic graphical manifolds, which is useful for our stability result. 
\begin{lemma}\label{lemma2.2}
Let $(\Omega,g)$ be a compact (locally) hyperbolic graphical manifold, as in Definition \ref{def1} (resp., Definition \ref{def2}), with scalar curvature $R(g)\geq -n(n-1)$. Then we have the following quasi-local Riemannian Penrose inequality
\begin{equation}\label{penrose.ineq}
{m}_{BY}^S(\Sigma)\geq \frac{c_\epsilon}{2}\left(\frac{|\Sigma_o|}{\mathcal{A}_{n-1}}\right)^{n_\epsilon}\,,
\end{equation}
where
\begin{equation}
c_\epsilon=\begin{cases}
1 &\epsilon =1\\
r_0^{\frac{n+1}{2}}& \epsilon =0\\
\sqrt{r_0^2-1} r_0^{\frac{n-3}{2}}& \epsilon =-1,
\end{cases},\qquad\qquad n_\epsilon=\begin{cases}
\frac{n-2}{n-1} &\epsilon =1\\
\frac{1}{2}& \epsilon =0,-1
    \end{cases}, 
\end{equation}where $r_0$ is the radius of the largest $r$-constant surface in $U_o$.
\end{lemma}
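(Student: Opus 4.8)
The plan is to chain together three ingredients that are already available: the lower bound of Lemma \ref{lemma2.1}, the monotonicity hidden in \eqref{eq4}, and the weighted Minkowski inequality of \cite[Theorem 1.1, Theorem 2.3]{delimaGirao}. Write $Q(\Sigma_h)=\int_{\Sigma_h}V_\epsilon\frac{V_\epsilon^2|Df|^2}{1+V_\epsilon^2|Df|^2}\mathring{H}_{\Sigma_h}\,dA_h$ for the quantity appearing on the right of \eqref{lower.1}. Since $R(g)\geq -n(n-1)$, the left-hand side of \eqref{eq4} is nonnegative, so $h\mapsto Q(\Sigma_h)$ is nondecreasing; exactly as in the proof of Lemma \ref{lemma2.1}, the outer boundary $\Sigma$ sits at the extremal (top) level, so $Q(\Sigma)\geq Q(\Sigma_o)$. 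Combining this with \eqref{lower.1} gives $m_{BY}^S(\Sigma)\geq \frac{1}{2(n-1)\mathcal{A}_{n-1}}Q(\Sigma_o)$, and the task reduces to evaluating $Q(\Sigma_o)$ and bounding it below.

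First I would identify $Q(\Sigma_o)$ with a reference integral. The inner boundary is a minimal hypersurface of $(\Omega,g)$, so $H_{\Sigma_o}=0$; since $f$ attains its finite extremal value along $\partial U_o$ while $\mathring{H}_{\partial U_o}>0$ there, relation \eqref{eq6} forces $|Df|\to\infty$ as $x\to\partial U_o$ (the graph becomes vertical at the horizon). Hence $\frac{V_\epsilon^2|Df|^2}{1+V_\epsilon^2|Df|^2}\to 1$ and $Q(\Sigma_o)=\int_{\partial U_o}V_\epsilon\mathring{H}_{\partial U_o}\,dA$, the weighted total mean curvature of $\partial U_o$ computed intrinsically in $(\mathbb{M}^n_\epsilon,b_\epsilon)$. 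By Definition \ref{def1}--\ref{def2} the surface $\partial U_o$ is star-shaped, mean convex and outer-minimizing in the reference, so the Minkowski inequality of \cite{delimaGirao} applies and yields, writing $s=|\partial U_o|/\mathcal{A}_{n-1}$, the model-sphere estimate $\int_{\partial U_o}V_\epsilon\mathring{H}\,dA\geq (n-1)\mathcal{A}_{n-1}\left(s^{n/(n-1)}+\epsilon\, s^{(n-2)/(n-1)}\right)$, where equality is attained on coordinate spheres $\{r=\mathrm{const}\}$.

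For $\epsilon=1$ both terms on the right are positive, so discarding $s^{n/(n-1)}$ already gives $m_{BY}^S(\Sigma)\geq \frac12 s^{(n-2)/(n-1)}$, which is \eqref{penrose.ineq} with $c_1=1$ and $n_1=\frac{n-2}{n-1}$. For $\epsilon=0,-1$ I would instead pass to the exponent $\frac12$ using the area comparison $|\partial U_o|\geq \mathcal{A}_{n-1}r_0^{n-1}$: the largest coordinate sphere $\{r=r_0\}\subset U_o$ is enclosed by $\partial U_o$, and coordinate spheres are outer-minimizing in $(\mathbb{M}^n_\epsilon,b_\epsilon)$, so $\partial U_o$ cannot have smaller area. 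Setting $x=s^{1/(n-1)}\geq r_0$, the desired bound reduces to the one-variable inequality $x^{(n+1)/2}+\epsilon\, x^{(n-3)/2}\geq c_\epsilon$ for $x\geq r_0$; the left-hand side is increasing in $x$ for $x>1$, so it suffices to check it at $x=r_0$, which gives equality for $\epsilon=0$ (hence $c_0=r_0^{(n+1)/2}$) and an elementary estimate for $\epsilon=-1$ producing $c_{-1}=\sqrt{r_0^2-1}\,r_0^{(n-3)/2}$, in both cases with $n_\epsilon=\frac12$.

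The main obstacle is precisely this passage from the natural de Lima--Girão exponent $\frac{n}{n-1}$ to the exponent $\frac12$ required by \eqref{penrose.ineq}: it is here that the geometric input $|\partial U_o|\geq \mathcal{A}_{n-1}r_0^{n-1}$, the outer-minimizing property of coordinate spheres, and the monotonicity of $x\mapsto x^{(n+1)/2}+\epsilon\,x^{(n-3)/2}$ all enter, and it is what forces the horizon radius $r_0$ into the constant $c_\epsilon$. A secondary technical point is making the vertical limit $|Df|\to\infty$ at the minimal inner boundary rigorous, so that $Q(\Sigma_o)$ genuinely equals the reference integral $\int_{\partial U_o}V_\epsilon\mathring{H}\,dA$; this should follow from \eqref{eq6} together with the finiteness of the range of $f$ and the mean convexity of the level sets guaranteed by Lemma \ref{lemma2.1} and \cite{CabreraPacheco19}.
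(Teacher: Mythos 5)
Your proposal follows essentially the same route as the paper: it lower-bounds $m_{BY}^S(\Sigma)$ by $\frac{1}{2(n-1)\mathcal{A}_{n-1}}\int_{\Sigma_o}V_\epsilon\mathring{H}_{\Sigma_o}\,dA$ using Lemma \ref{lemma2.1}, the monotonicity coming from \eqref{eq4}, and the blow-up $|Df|\to\infty$ at the minimal inner boundary, and then applies the de Lima--Gir\~ao Minkowski inequality together with the area comparison $|\Sigma_o|\geq r_0^{n-1}\mathcal{A}_{n-1}$ to extract the exponents $n_\epsilon$ and constants $c_\epsilon$. The extra detail you supply (the explicit monotone quantity $Q(\Sigma_h)$ and the one-variable reduction in $x=s^{1/(n-1)}$) is just a more spelled-out version of the paper's computation, so the two proofs coincide in substance.
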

\begin{proof}
Combining positivity of $V=V_\epsilon$, equation \eqref{eq4}, $|Df|\to\infty$ on $\Sigma_o$, and the Lemma \ref{lemma2.1}, we have
\begin{equation}\label{eq10}
\begin{split}
{m}_{BY}^S(\Sigma)&\geq\frac{1}{2(n-1)\mathcal{A}_{n-1}}\int_{\Sigma_o}V_\epsilon\mathring{H}_{\Sigma_o}\,dA\,.
\end{split}
\end{equation}By the Minkowski inequality \cite[Theorem 1.1, Theorem 2.3]{delimaGirao}, we have 

\begin{equation}\label{min1}
\begin{split}
\int_{\Sigma_o }V\mathring{H}_{\Sigma_o}dA&\geq (n-1)\mathcal{A}_{n-1}\left(\left(\frac{|\Sigma_o|}{\mathcal{A}_{n-1}}\right)^{\frac{n}{n-1}}+\epsilon\left(\frac{|\Sigma_o|}{\mathcal{A}_{n-1}}\right)^{\frac{n-2}{n-1}}\right)\,.
    \end{split}
\end{equation} When $\epsilon=1$, a lower bound is 
\begin{equation}
\int_{\Sigma_o }V\mathring{H}_{\Sigma_o}dA\geq (n-1)\mathcal{A}_{n-1}\left(\frac{|\Sigma_o|}{\mathcal{A}_{n-1}}\right)^{\frac{n-2}{n-1}}\,.
\end{equation}When $\epsilon=0,-1$, assuming $U_o$ includes a constant $r$-surface with largest radius $r_0$, we have $|\Sigma_0|\geq r_0^{n-1}\mathcal{A}_{n-1}$. Therefore, for $\epsilon=0$, the lower bound of \eqref{min1} is 
\begin{equation}
\int_{\Sigma_o }V\mathring{H}_{\Sigma_o}dA\geq (n-1)r_0^{\frac{n+1}{2}}\mathcal{A}_{n-1}\left(\frac{|\Sigma_o|}{\mathcal{A}_{n-1}}\right)^{\frac{1}{2}}\,.
\end{equation}Furthermore, for $\epsilon=-1$, the lower bound of \eqref{min1} is 
\begin{equation}
 \begin{split}
\int_{\Sigma_o }V\mathring{H}_{\Sigma_o}dA&\geq (n-1)\mathcal{A}_{n-1}\left(\frac{|\Sigma_o|}{\mathcal{A}_{n-1}}\right)^{\frac{n-2}{n-1}}\left(\left(\frac{|\Sigma_o|}{\mathcal{A}_{n-1}}\right)^{\frac{2}{n-1}}-1\right)\\
&\geq (n-1)\sqrt{r_0^2-1}r_0^{\frac{n-3}{2}}\mathcal{A}_{n-1}\left(\frac{|\Sigma_o|}{\mathcal{A}_{n-1}}\right)^{\frac{1}{2}}\,.
\end{split}
\end{equation} Therefore, we have 
\begin{equation}\label{min2}
\begin{split}
\int_{\Sigma_o }V\mathring{H}_{\Sigma_o}dA&\geq (n-1)\mathcal{A}_{n-1}c_{\epsilon} \left(\frac{|\Sigma_o|}{\mathcal{A}_{n-1}}\right)^{n_{\epsilon}}\,.
    \end{split}
\end{equation}This complete the proof.
\end{proof}

\begin{remark} The Minkowski inequality used in Lemma \ref{lemma2.2} differs from the one employed in the proof of the stability of the positive mass theorem for asymptotically hyperbolic graphs \cite[Equation (4.4)]{CabreraPacheco19}. Specifically, the Minkowski inequality \cite[Equation (4.4)]{CabreraPacheco19} is missing a $\kappa$ on the right-hand side; see \eqref{Minfinal}. This discrepancy implies that the estimate in \cite[Lemma 4.8]{CabreraPacheco19} is incorrect. Consequently, the main stability result \cite[Theorem 1.1]{CabreraPacheco19} does not hold when relying on this estimate. However, a result similar to \cite[Theorem 1.1]{CabreraPacheco19} should hold if one replaced \cite[Equation (4.4)]{CabreraPacheco19} with Minkowski inequality \cite[Theorem 1.1, Theorem 2.3]{delimaGirao}.

Following \cite{MontielRos}, we provide a proof of Minkowski inequality \cite[Equation (4.4)]{CabreraPacheco19} in hyperbolic space $\mathbb{H}^n_{-\kappa^2}$, for $\kappa>0$. Consider the hyperbloidal model of hyperbolic space $(\Hr^n_{-\kappa^2},b_\kappa=d{r}^2+\kappa^{-2}\sinh^2\kappa{r}g_{S^{n-1}})$, with constant curvature $-\kappa^2$, which is a graph in the Minkowski space $(\mathbb{R}^{n,1},\eta=\langle \cdot,\cdot\rangle)$. Consider an immersion $\psi:{\Sigma}\to \Hr^n_{-\kappa^2}\subset\mathbb{R}^{n,1}$ with $|\psi|^2_\eta=-\frac{1}{\kappa^2}$ and $\psi^0=\langle\psi,\partial_t\rangle\geq \frac{1}{\kappa}$. 

Let $\mathbf{\mathring{H}}_{{\Sigma}}=-\kappa^2(n-1)\psi+\mathring{H}_{{\Sigma}} N$ be the mean curvature vector of ${\Sigma}$ in $\mathbb{R}^{n,1}$, where $\{\kappa\psi,N\}$ is an orthonormal frame for the normal bundle of ${\Sigma}$ in $\mathbb{R}^{n,1}$ such that $\kappa\psi$ is a unit timelike normal vector and $N$ is unit spacelike normal. Let $\Delta_{{\Sigma}}$ denote the Laplacian of the induced metric on ${\Sigma}$, then  $\Delta_{{\Sigma}}\psi=\mathbf{\mathring{H}}_{{\Sigma}}$ and for any constant $a\in\mathbb{R}^{n,1}$ we obtain
\begin{equation}\label{Min1}
0=\int_{{\Sigma}}\Delta_{{\Sigma}}\langle\psi,a\rangle \,d{A}=\int_{{\Sigma}}\left(-\kappa^2\left(n-1\right)\langle\psi,a\rangle+\mathring{H}_{{\Sigma}}\langle N, a\rangle\right)\, d{A},
\end{equation}
where $d{A}$ is volume element of ${\Sigma}$. If $a=(1,0,\cdots,0)\in\mathbb{R}^{n,1}$, then we have the following relation
\begin{equation}
\int_{{\Sigma}}\left(\kappa^2\left(n-1\right)\psi^0+\mathring{H}_{{\Sigma}}N^0\right)d{A}=0.
\end{equation}

If ${\Sigma}$ is a mean convex level set of a function ${f}:\mathbb{H}^n_{-\kappa^2}\to\mathbb{R}$, the unit normal of ${\Sigma}$ in $\mathbb{R}^{n,1}$ is $N=\frac{(-1, V_\kappa{D} {f} )}{\sqrt{1+V_\kappa^2|{D} \tilde{f}|^2}}$, where $V_\kappa=\cosh\kappa{r}$ and ${D}$ is the Levi-Civita connection with respect to $b_\kappa$. 
Together with $\psi^0\geq \frac{1}{\kappa}$ and $-N^0=\frac{1}{\sqrt{1+V^2_\kappa|{D}{f}|^2}}<1$ , we have 
\begin{equation}\label{Minfinal}
\begin{split}
\int_{{\Sigma}}\mathring{H}_{{\Sigma}}\,d{A}&> -\int_{{\Sigma}}\mathring{H}_{\Sigma}N^0\,d{A}\\
   &\geq \left(n-1\right)\int_{{\Sigma}}\kappa^2\psi^0\,d{A}\\
   &\geq \left(n-1\right)\kappa|{\Sigma}|\,.
\end{split}
\end{equation}
This is a Minkowski inequality in $\mathbb{H}^n_{-\kappa^2}$ that is not sharp. 
\end{remark}

\section{A Volume Estimate}
In this section, we obtain an estimate for the volume of compact (locally) hyperbolic graphical manifold $(\Omega,g)$ in terms of static potential $V=V_\epsilon$, the volume of $\overline{U}\backslash U_o$, geometry of $\Omega$, $\epsilon$, $c_{\epsilon}$, $\xi\geq 1$, the static Brown-York quasi-local energy $\mathbf{m}:={m}_{BY}^S(\Sigma)$ of the outer boundary of $\Omega$, and $\min f$. Let $\mathcal{V}(h)$ denote the volume of regular level set $\Sigma_h$ in $\Omega$. We define a constant 
\begin{equation}
h_o:= \sup\left\{h:\mathcal{V}(h)\leq 2(1+\xi)^{\frac{1}{n_\epsilon}} \mathcal{A}_{n-1}\left(2c^{-1}_\epsilon \mathbf{m}\right)^{\frac{1}{n_\epsilon}}\right\},
\end{equation}
where $\xi\geq 1$, and $n_\epsilon$ and $c_\epsilon$ are defined in Lemma \ref{lemma2.2}. If this set is empty, we set $h_o:=\inf f$. Since $\Sigma_h$ is outer-minimizing in our setting, the function $\mathcal{V}(h)$ is lower semicontinuous and non-decreasing. 
Before the volume estimate, we establish the following lemma for the rate of volume growth of level sets. 
\begin{lemma}
Let $(\Omega,g)$ be a compact (locally) hyperbolic graphical manifold with scalar curvature $R(g)\geq -n(n-1)$. If $\mathbf{m}:={m}_{BY}^S(\Sigma)$, then for any regular level set $\Sigma_h$ with $\mathcal{V}(h)>\mathcal{A}_{n-1}\left(2c^{-1}_\epsilon \mathbf{m}\right)^{\frac{1}{n_\epsilon}}$, the rate of volume growth is 
\begin{equation}\label{volumegrowth}
\mathcal{V}'(h)>\frac{4(n-1)\mathcal{A}_{n-1}\mathbf{m}}{3\sqrt{3}}\left(\frac{c_\epsilon}{2\mathbf{m}}\left(\frac{\mathcal{V}(h)}{\mathcal{A}_{n-1}}\right)^{n_\epsilon}-1\right)^{3/2}\,.
\end{equation} 
\end{lemma}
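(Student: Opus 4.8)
The plan is to rewrite $\mathcal{V}'(h)$ as a curvature integral over $\Sigma_h$ and then bound it from below by interpolating, via Hölder's inequality, between the monotone ``mass'' integral (controlled from above by $\mathbf{m}$) and the Minkowski integral (controlled from below by $\mathcal{V}(h)$). First I would record the first variation of area. Because $\Sigma_h$ lies in the slice $\{t=h\}$, on which $\bar b_\epsilon$ restricts to $b_\epsilon$, its induced area equals that of $\{f=h\}$ in $(\mathbb{M}^n_\epsilon,b_\epsilon)$, and as $h$ increases the level sets move in the direction $Df/|Df|$ with normal speed $1/|Df|$. The first variation formula then gives
\[
\mathcal{V}'(h)=\int_{\Sigma_h}\frac{\mathring{H}_{\Sigma_h}}{|Df|}\,dA_h .
\]
Writing $d\mu:=V\mathring{H}_{\Sigma_h}\,dA_h$, which is a positive measure since $R(g)\geq -n(n-1)$ forces strict mean convexity and $V=V_\epsilon>0$, and setting $s:=V|Df|>0$ on the regular level set, this reads $\mathcal{V}'(h)=\int s^{-1}\,d\mu$.

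Next I would assemble the two a priori bounds on $\Sigma_h$. For the upper control, let $Q(h):=\int_{\Sigma_h}V\frac{V^2|Df|^2}{1+V^2|Df|^2}\mathring{H}_{\Sigma_h}\,dA_h=\int \frac{s^2}{1+s^2}\,d\mu$; by \eqref{eq4} together with $R(g)\geq -n(n-1)$ and $V>0$ the function $Q$ is non-decreasing, so $Q(h)\leq Q(\Sigma)$, and \eqref{lower.1} of Lemma \ref{lemma2.1} gives $Q(\Sigma)\leq 2(n-1)\mathcal{A}_{n-1}\mathbf{m}=:M$. For the lower control, the Minkowski inequality \cite[Theorem 1.1, Theorem 2.3]{delimaGirao} applies verbatim to the star-shaped, mean convex level set $\Sigma_h$, which encloses the largest $r$-surface of radius $r_0$; repeating the estimates of Lemma \ref{lemma2.2} yields $I(h):=\int_{\Sigma_h}V\mathring{H}_{\Sigma_h}\,dA_h=\int d\mu\geq (n-1)\mathcal{A}_{n-1}c_\epsilon\big(\mathcal{V}(h)/\mathcal{A}_{n-1}\big)^{n_\epsilon}=:P$. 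The running hypothesis $\mathcal{V}(h)>\mathcal{A}_{n-1}(2c_\epsilon^{-1}\mathbf{m})^{1/n_\epsilon}$ is precisely $P>M$, so that $\int\frac{1}{1+s^2}\,d\mu=I(h)-Q(h)\geq P-M>0$.

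The heart of the argument, and the step I expect to be the main obstacle, is to convert these bounds into the $3/2$-power estimate. The key elementary inequality is the pointwise bound $\frac{1}{1+s^2}\leq (1+s^2)^{-1/3}=(s^{-1})^{2/3}\big(\tfrac{s^2}{1+s^2}\big)^{1/3}$, after which Hölder's inequality with exponents $(3/2,3)$ applied to the positive measure $d\mu$ gives
\[
I(h)-Q(h)=\int \frac{1}{1+s^2}\,d\mu\leq \Big(\int s^{-1}\,d\mu\Big)^{2/3}\Big(\int \tfrac{s^2}{1+s^2}\,d\mu\Big)^{1/3}=\big(\mathcal{V}'(h)\big)^{2/3}\,Q(h)^{1/3}.
\]
Rearranging yields $\mathcal{V}'(h)\geq (I(h)-Q(h))^{3/2}/Q(h)^{1/2}$.

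Finally, since this lower bound is increasing in $I(h)-Q(h)$ and decreasing in $Q(h)$, I would substitute $I(h)-Q(h)\geq P-M$ and $Q(h)\leq M$ to obtain
\[
\mathcal{V}'(h)\geq \frac{(P-M)^{3/2}}{M^{1/2}}=2(n-1)\mathcal{A}_{n-1}\mathbf{m}\left(\frac{c_\epsilon}{2\mathbf{m}}\Big(\frac{\mathcal{V}(h)}{\mathcal{A}_{n-1}}\Big)^{n_\epsilon}-1\right)^{3/2},
\]
and since $2>\tfrac{4}{3\sqrt3}$ this is in fact stronger than the stated strict inequality. The two points requiring genuine care are verifying that the Minkowski inequality transfers to every intermediate regular level set with the same constant $c_\epsilon$ (so that $I(h)\geq P$ with the claimed $\mathcal{V}(h)$-dependence) and justifying the first variation identity for $\mathcal{V}'(h)$ at regular values; the Hölder interpolation is then the decisive, but short, computation, and it is what produces the exponent $3/2$.
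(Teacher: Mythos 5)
Your proposal is correct, and it reaches the stated differential inequality by a genuinely different final step than the paper, even though the two main ingredients are the same. Like the paper, you start from $\mathcal{V}'(h)=\int_{\Sigma_h}\mathring{H}_{\Sigma_h}|Df|^{-1}dA_h$, bound $Q(h)=\int_{\Sigma_h}V\tfrac{V^2|Df|^2}{1+V^2|Df|^2}\mathring{H}_{\Sigma_h}dA_h$ above by $2(n-1)\mathcal{A}_{n-1}\mathbf{m}$ via the monotonicity identity \eqref{eq4} and Lemma \ref{lemma2.1} (you are in fact more explicit than the paper about why the bound at the outer boundary propagates down to every regular level), and bound $\int_{\Sigma_h}V\mathring{H}_{\Sigma_h}dA_h$ below via the de Lima--Gir\~ao Minkowski inequality exactly as in Lemma \ref{lemma2.2}. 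Where you diverge is in combining these: the paper splits $\Sigma_h$ into the sets $\{V|Df|\geq\alpha\}$ and $\{V|Df|<\alpha\}$, obtains a one-parameter family of lower bounds $G(\alpha)$, and optimizes over the threshold $\alpha$, which is what produces the constant $\tfrac{4}{3\sqrt{3}}$; you instead use the pointwise bound $\tfrac{1}{1+s^2}\leq(s^{-1})^{2/3}\bigl(\tfrac{s^2}{1+s^2}\bigr)^{1/3}$ with $s=V|Df|$ and a single H\"older interpolation with exponents $(3/2,3)$. The two devices are morally the same (the threshold splitting is a poor man's H\"older), but your version is shorter and yields the constant $2$ in place of $\tfrac{4}{3\sqrt{3}}\approx 0.77$, so it strictly implies the paper's inequality whenever $\mathbf{m}>0$ and $\mathcal{V}(h)>\mathcal{A}_{n-1}(2c_\epsilon^{-1}\mathbf{m})^{1/n_\epsilon}$; the exponent $3/2$ appears in both arguments for the same structural reason. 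The only caveats are the ones you already flag yourself --- checking that the Minkowski inequality applies to each intermediate level set with the same $c_\epsilon$ (which requires $\mathcal{V}(h)\geq|\Sigma_o|\geq r_0^{n-1}\mathcal{A}_{n-1}$ from outer-minimization, exactly as the paper asserts when it "repeats the argument in the proof of \eqref{min2}") and the first variation identity at regular values --- and both are handled identically in the paper, so there is no gap.
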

\begin{proof}
Let $\Sigma_h$ be a regular level set of 
$\Omega$ and set $V_\epsilon=V$. Let $U_\alpha:=\{x\in\mathbb{M}_\epsilon^n:V|D f(x)|\geq \alpha\}$ and $L_\alpha:=\{x\in\mathbb{M}_\epsilon^n:V|D f(x)|< \alpha\}$, then by Lemma \ref{lemma2.1}, the quasi-local energy has the following lower bound
\begin{equation}
    \begin{aligned}
        2(n-1)\mathcal{A}_{n-1}\mathbf{m}&\geq\int_{\Sigma_h\cap U_\alpha}V\frac{V^2|Df|^2}{1+V^2|Df|^2}\mathring{H}_{\Sigma}\,dA_h\\
        &\geq\frac{\alpha^2}{1+\alpha^2}\int_{\Sigma_h\cap U_\alpha}V\mathring{H}_{\Sigma_h}dA_h.
    \end{aligned}
\end{equation}
This implies that
\begin{equation}\label{equa1}
    \begin{aligned}
        -\int_{\Sigma_h\cap U_\alpha}V\mathring{H}_{\Sigma_h}dA_h\geq -2(1+\alpha^{-2})(n-1)\mathcal{A}_{n-1}\mathbf{m}\,.
    \end{aligned} 
\end{equation}
Then, we can estimate the rate of volume growth as follows
 \begin{equation}  \label{equa2}         
    \begin{aligned}
    \mathcal{V}'(h)&=\int_{\Sigma_h}\frac{\mathring{H}_{\Sigma_h}}{|Df|}dA_h\\
     &=\int_{\Sigma_h\cap U_\alpha}\frac{\mathring{H}_{\Sigma_h}}{|Df|}dA_h+\int_{\Sigma_h\cap L_\alpha}\frac{\mathring{H}_{\Sigma_h}}{|Df|}dA_h\\
     &>\frac{1}{\alpha}\int_{\Sigma_h\cap L_\alpha}V\mathring{H}_{\Sigma_h}dA_h\\
     &=\frac{1}{\alpha}\left(\int_{\Sigma_h}V\mathring{H}_{\Sigma_h}dA_h-\int_{\Sigma_h\cap U_\alpha}V_\kappa\mathring{H}_{\Sigma_h}dA_h\right)\,.
     \end{aligned}
 \end{equation}
Combining \eqref{equa1} and \eqref{equa2}, we obtain
 \begin{equation}\label{equa23}
     \begin{aligned}
      \mathcal{V}'(h)\geq\frac{1}{\alpha}\left(\int_{\Sigma_h }V\mathring{H}_{\Sigma_h}dA_h-2(1+\alpha^{-2})(n-1)\mathcal{A}_{n-1}\mathbf{m}\right)\,.  
     \end{aligned}
 \end{equation}
Since all level sets are outer-minimizing, repeating argument in the proof of \eqref{min2} for $\mathcal{V}(h)$, we have
\begin{equation}\label{min2}
    \begin{split}
        \int_{\Sigma_h }V\mathring{H}_{\Sigma_h}dA_h&\geq (n-1)\mathcal{A}_{n-1}c_{\epsilon} \left(\frac{\mathcal{V}(h)}{\mathcal{A}_{n-1}}\right)^{n_{\epsilon}}
    \end{split}
\end{equation}
Together with \eqref{equa23}, we get
\begin{equation}\label{equa234}
\begin{aligned}
\mathcal{V}'(h)\geq\frac{2(n-1)\mathcal{A}_{n-1}}{\alpha}\left(\frac{c_{\epsilon}}{2} \left(\frac{\mathcal{V}(h)}{\mathcal{A}_{n-1}}\right)^{n_\epsilon}-(1+\alpha^{-2})\mathbf{m}\right)\,,
     \end{aligned}
 \end{equation}Consider the right hand side as a function of $\alpha$ as follows
 \begin{equation}
 G(\alpha)=\frac{2(n-1)\mathcal{A}_{n-1}}{\alpha}\left(\gamma-(1+\alpha^{-2})\mathbf{m}\right)\,,
 \end{equation}where $\gamma=2^{-1}c_{\epsilon} \left(\frac{\mathcal{V}(h)}{\mathcal{A}_{n-1}}\right)^{n_\epsilon}$. For $\mathcal{V}(h)>\mathcal{A}_{n-1}\left(2c^{-1}_\epsilon \mathbf{m}\right)^{\frac{1}{n_\epsilon}}$, the maximum of this function is at 
 \begin{equation}
     \alpha=\sqrt{3}\left[\frac{1}{\mathbf{m}}\gamma-1\right]^{-\frac{1}{2}}\,.
 \end{equation} Substituting this in \eqref{equa234}, we obtain the volume growth inequality \eqref{volumegrowth}.
\end{proof}

Next we include our height estimate for $\max f-h_o$. Note that we divide the result into two cases in order to present the correct quasi-local version of \cite[Lemma 4.8]{CabreraPacheco19} for future references. Otherwise, the main result of this paper addresses the convergence for $\overline{U}\backslash U_o$ with $n_\epsilon=\frac{1}{2}$ for all of our cases if for $\epsilon=1$ we set $c_\epsilon=\sqrt{1+r_0^{2}}r_0^{\frac{n-3}{2}}$.
\begin{lemma}\label{heightbound}
 Let $(\Omega,g)$ be a compact (locally) hyperbolic graphical manifold with $R(g)\geq -n(n-1)$. If $\mathbf{m}=m_{BY}^S(\Sigma)>0$, then for locally hyperbolic graphical manifold with $n\geq 3$ and three-dimensional hyperbolic graphical manifolds, we have
\begin{equation}\label{pppp1}
\begin{aligned}
0\leq \max(f)-h_o\leq C_{n,\epsilon}|\Sigma|^{\tfrac{1}{4}}\,\mathbf{m}^{\tfrac{1}{2}}\,,
\end{aligned}
\end{equation}and for compact hyperbolic graphical manifold with $n\geq 3$ we have
    \begin{equation}\label{pppp2}
        \begin{aligned}
            0\leq \max(f)-h_o\leq C_{n,\epsilon}\mathbf{m}^{\tfrac{1}{n-2}}\left(|\log\mathbf{m}|+|\Sigma|\right)\,,
        \end{aligned}
    \end{equation}where constant $C_{n,\epsilon}$ depends on $\epsilon$, $n$, $c_\epsilon$, and $\mathcal{A}_{n-1}$. 
\end{lemma}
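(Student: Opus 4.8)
The plan is to integrate the volume--growth inequality \eqref{volumegrowth} over the height interval $(h_o,\max f)$ and reduce everything to a single one--variable integral. By the definition of $h_o$ as a supremum, every $h\in(h_o,\max f)$ satisfies $\mathcal{V}(h)>2(1+\xi)^{1/n_\epsilon}\mathcal{A}_{n-1}(2c_\epsilon^{-1}\mathbf{m})^{1/n_\epsilon}>\mathcal{A}_{n-1}(2c_\epsilon^{-1}\mathbf{m})^{1/n_\epsilon}$, so the hypothesis of \eqref{volumegrowth} holds on the whole interval and its right--hand side is strictly positive there. Since the level sets are outer--minimizing, $\mathcal{V}$ is non--decreasing with $\mathcal{V}(h_o^+)\ge 2(1+\xi)^{1/n_\epsilon}\mathcal{A}_{n-1}(2c_\epsilon^{-1}\mathbf{m})^{1/n_\epsilon}$, while $\mathcal{V}(\max f)=|\Sigma|$ because the outer boundary $\Sigma$ is the topmost level set. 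Rewriting \eqref{volumegrowth} as $dh<\mathcal{V}'(h)^{-1}\,d\mathcal{V}$, integrating (handling the jumps of the merely monotone, lower--semicontinuous $\mathcal{V}$ by the monotone change--of--variables inequality, which only decreases the left side), and substituting $u=\mathcal{V}/\mathcal{A}_{n-1}$ yields
\begin{equation*}
\max f-h_o\;\le\;\frac{3\sqrt3}{4(n-1)\mathbf{m}}\int_{u_o}^{u_1}\Bigl(\tfrac{c_\epsilon}{2\mathbf{m}}u^{n_\epsilon}-1\Bigr)^{-3/2}\,du ,
\end{equation*}
with $u_o=2(1+\xi)^{1/n_\epsilon}(2c_\epsilon^{-1}\mathbf{m})^{1/n_\epsilon}$ and $u_1=|\Sigma|/\mathcal{A}_{n-1}$. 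I set $\beta=c_\epsilon/(2\mathbf{m})$ for brevity; everything now hinges on estimating this integral.

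For the locally hyperbolic cases ($\epsilon=0,-1$) and the three--dimensional hyperbolic case we have $n_\epsilon=\tfrac12$, and the integral is elementary. Substituting $w=u^{1/2}$ and then $\sigma=\beta w-1$ produces the antiderivative $\tfrac{4}{\beta^2}(\sigma^{1/2}-\sigma^{-1/2})$, which is increasing; hence the definite integral is at most its value at the upper endpoint $\sigma_1=\beta u_1^{1/2}-1\le\beta u_1^{1/2}$, giving the bound $\tfrac{4}{\beta^2}\sigma_1^{1/2}\le\tfrac{4}{\beta^{3/2}}u_1^{1/4}$. Inserting $\beta=c_\epsilon/(2\mathbf{m})$, the prefactor $\mathbf{m}^{-1}\beta^{-3/2}$ collapses to a constant times $\mathbf{m}^{1/2}$, while $u_1^{1/4}$ contributes $|\Sigma|^{1/4}$; after absorbing $\mathcal{A}_{n-1},c_\epsilon,\xi$ into $C_{n,\epsilon}$ this is exactly \eqref{pppp1}.

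For the compact hyperbolic case ($\epsilon=1$) I would change variables to $t=\beta u^{n_\epsilon}$, whose advantage is that the lower limit $t_o=2^{n_\epsilon}(1+\xi)$ is a \emph{constant} independent of $\mathbf{m}$, whereas $t_1=\beta u_1^{n_\epsilon}=\tfrac{1}{2\mathbf{m}}(|\Sigma|/\mathcal{A}_{n-1})^{n_\epsilon}$. Using $1/n_\epsilon-1=1/(n-2)$, the integral becomes a constant multiple of $\beta^{-1/n_\epsilon}\int_{t_o}^{t_1}(t-1)^{-3/2}t^{1/(n-2)}\,dt$, and the prefactor $\mathbf{m}^{-1}\beta^{-1/n_\epsilon}$ collapses to a constant times $\mathbf{m}^{1/(n-2)}$. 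Because $t_o>2$ we have $t\le 2(t-1)$ on the whole range, so the integrand is $\le 2^{1/(n-2)}(t-1)^{(8-3n)/(2(n-2))}$, whose exponent is $\le-1$ precisely when $n\ge4$. Integrating then gives $\log(t_1/t_o)$ for $n=4$ and a bounded $\mathbf{m}$--independent constant for $n\ge5$; expanding $\log t_1=|\log(2\mathbf{m})|+n_\epsilon\log(|\Sigma|/\mathcal{A}_{n-1})$ and crudely bounding $\log|\Sigma|\le|\Sigma|$ produces the factor $|\log\mathbf{m}|+|\Sigma|$, whence \eqref{pppp2}. The residual sub--case $n=3,\epsilon=1$ has $n_\epsilon=\tfrac12$ and is already covered by \eqref{pppp1}.

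The main obstacle is this last case. One genuinely must pass to the variable $t$ so that the lower endpoint becomes $\mathbf{m}$--independent: integrating in $u$ directly leaves a lower limit $u_o\sim\mathbf{m}^{1/n_\epsilon}\to0$, which makes the naive tail estimate $\int_{u_o}^{\infty}$ blow up for $n\ge5$, whereas the $t$--substitution quarantines all the $\mathbf{m}$--dependence in the prefactor and the upper limit. Then one splits on the sign of the exponent $(8-3n)/(2(n-2))$ to see the logarithm appear exactly at $n=4$. The delicate bookkeeping is tracking how the powers of $\mathbf{m}$ recombine across the two substitutions and checking that the constant $t_o>2$ controls $t\le 2(t-1)$ uniformly; the differentiation of the merely monotone $\mathcal{V}$ is routine but should be justified via the monotone change--of--variables inequality rather than a naive application of the fundamental theorem of calculus.
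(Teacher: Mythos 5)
Your proposal is correct and follows essentially the same route as the paper: both integrate the volume-growth inequality \eqref{volumegrowth} over $(h_o,\max f)$, reduce to a one-variable integral in the volume, and split on $n_\epsilon=\tfrac12$ versus $n_\epsilon=\tfrac{n-2}{n-1}$ with the logarithm appearing at $n=4$. The only technical divergence is that you integrate the differential inequality directly via the monotone change-of-variables inequality, whereas the paper compares $\mathcal{V}$ with the exact solution $Y$ of the associated ODE using the comparison lemma of Huang--Lee and then inverts $Y$; the resulting estimates are the same.
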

\begin{proof}Assume $h_o<\text{max}(f)$, otherwise, there is nothing to prove. The proof uses \cite[Lemma 3.9]{HuangLee}. For function $Y(h)$, consider the differential equation 
\begin{equation}\label{l0}
\begin{aligned}
Y'(h)&=\frac{4(n-1)\mathcal{A}_{n-1}\mathbf{m}}{3\sqrt{3}}\left(\frac{c_\epsilon}{2\mathbf{m}}\left(\frac{Y(h)}{\mathcal{A}_{n-1}}\right)^{n_\epsilon}-1\right)^{3/2}\,,\\ 
Y(h_o)&=2(1+\xi)^{\frac{1}{n_\epsilon}} \mathcal{A}_{n-1}\left(2c^{-1}_\epsilon \mathbf{m}\right)^{\frac{1}{n_\epsilon}}.
\end{aligned}
\end{equation}First, we define the following positive function
\begin{equation}\label{ph}
p(h)=\frac{c_\epsilon}{2\textbf{m}}\left(\frac{Y(h)}{\mathcal{A}_{n-1}}\right)^{n_\epsilon}-1\,.
\end{equation}Then we have 
\begin{equation}\label{l1}
    \begin{aligned}
        p'(h)=\frac{n_\epsilon c_\epsilon}{2\textbf{m}}\frac{Y'(h)}{\mathcal{A}_{n-1}}\left(\frac{Y(h)}{\mathcal{A}_{n-1}}\right)^{n_\epsilon-1}\,,
    \end{aligned}
\end{equation}
Combining \eqref{l0} and \eqref{l1}, we have 
\begin{equation}\label{ppp1p1}
    \begin{aligned}
        h-h_o=D_{n,\epsilon}\textbf{m}^{\frac{1-n_\epsilon}{n_\epsilon}}\int_{h_o}^h\left(p+1\right)^{\frac{1-n_\epsilon}{n_\epsilon}}p^{-\frac{3}{2}}\,dp\,,
    \end{aligned}
\end{equation}where $D_{n,\epsilon}=\frac{3\sqrt{3}2^{\frac{1-n_\epsilon}{n_\epsilon}}}{2(n-1)n_\epsilon c_\epsilon^{\frac{1}{n_\epsilon}}}$. For locally hyperbolic graphical manifold $n\geq 3$ or three-dimensional hyperbolic graphical manifold, the constant $n_\epsilon$ is $n_\epsilon=\frac{1}{2}$. Therefore,  \eqref{ppp1p1} implies
\begin{equation}\label{eqrq1}
    h-h_o=2D_{n,\epsilon}\,\textbf{m}\left[\sqrt{p(h)}-\frac{1}{\sqrt{p(h)}}\right]_{h_o}^h\leq 4c_\epsilon\frac{D_{n,\epsilon}}{(4\mathcal{A}_{n-1})^{1/4}}\sqrt{\textbf{m}}\left[Y(h)\right]^{\frac{1}{4}}\,,
\end{equation}where for the last inequality we used $p(h_o)=\xi>1$, definition of $p(h)$, and $p(h)$ is a non-decreasing function for $h\geq h_o$. Observe that $\mathcal{V}(h)$ is non-decreasing function of $h$, $\mathcal{V}(h_o)\geq Y(h_o)$,  $\mathcal{V}'(h)\geq F(\mathcal{V}(h))$, and $Y'(h)=F(Y)$ for $h\geq h_o$, where $F$ is a non-decreasing $C^1$ function. Therefore, functions $\mathcal{V}(h)$ and $Y(h)$ have conditions of the comparison Lemma \cite[Lemma 3.9]{HuangLee} for solutions of first order ODE which implies $\mathcal{V}\geq Y(h)$ for $h\geq h_o$. Combining this with equation \eqref{eqrq1}, we obtain equation \eqref{pppp1}.

For compact hyperbolic graphical manifold with $n\geq 4$, we have
 \begin{equation}\label{hn}
     \begin{aligned}
         h-h_o=D_{n,\epsilon}\textbf{m}^{\frac{1-n_\epsilon}{n_\epsilon}}\int_{h_o}^h\left(p+1\right)^{\frac{1-n_\epsilon}{n_\epsilon}}p^{-\frac{3}{2}}\,dp\leq D_{n,\epsilon}\mathbf{m}^{\frac{1}{n-2}}\left(k(h)-k(h_o)\right)\,,
     \end{aligned}
 \end{equation}where 
 \begin{equation}
     k(h)=-2\frac{\sqrt{p(h)+1}}{\sqrt{p(h)}}+\log\left(\frac{1}{2}+p(h)+\sqrt{p(h)^2+p(h)}\right)\,,
 \end{equation}As shown in \cite{AlaeepachecoMccormick}, we have 
 \begin{equation}
     \begin{aligned}
        k(h)-k(h_o)\leq 6\log(1+p(h))\,.
     \end{aligned}
 \end{equation} Combine this with \eqref{ph} and \eqref{hn} lead to the result.
\end{proof}

\begin{lemma}\label{volume estimates}
      Let $(\Omega,g)$ be a compact (locally) hyperbolic graphical manifold with $R\geq -n(n-1)$. If $\mathbf{m}=m_{BY}^S(\Sigma)>0$, then for locally hyperbolic graphical manifold with $n\geq 3$ and three-dimensional hyperbolic graphical manifolds, we have
\begin{equation}
    \begin{aligned}
        \operatorname{vol}(\overline{U}\backslash U_o)\leq\operatorname{vol}(\Omega)\leq  \operatorname{vol}(\overline{U}\backslash U_o)+C\left(\mathbf{m}^{\tfrac{1}{2}}+\mathbf{m}^{\frac{2n}{n-1}}+V_{max}\left(h_o-\min f\right)\mathbf{m}^{2}\right)\,,
    \end{aligned}
\end{equation}and for hyperbolic graphical manifolds with $n\geq 4$, we have
\begin{equation}
    \begin{aligned}
        \operatorname{vol}(\overline{U}\backslash U_o)\leq\operatorname{vol}(\Omega)\leq &  \operatorname{vol}(\overline{U}\backslash U_o)\\
        &+C\left(\mathbf{m}^{\tfrac{1}{n-2}}\left(|\log\mathbf{m}|+|\Sigma|\right)+\mathbf{m}^{\frac{2n}{n-1}}+V_{max}\left(h_o-\min f\right)\mathbf{m}^{\frac{1}{n_\epsilon}}\right)\,,
    \end{aligned}
\end{equation}
where $V_{max}=\max_{h\leq h_o}V$, $\operatorname{vol}(\cdot)$ denotes the $n$-dimensional volume, and $C$ is a constant depend on $C_{iso}, n, |\Sigma|, C_{n,\epsilon}$ and $\xi$.
\end{lemma}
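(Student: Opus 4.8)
The plan is to start from the exact graph volume formula. Since $\Omega$ is the graph of $f$ over $\overline U\setminus U_o$ inside $(\mathbb M^{n+1}_\epsilon,\bar b_\epsilon=V^2dt^2+b_\epsilon)$, its induced metric is $g=b_\epsilon+V^2\,df\otimes df$; as $df\otimes df$ has rank one, the matrix--determinant lemma gives $dV_g=\sqrt{1+V^2|Df|^2}\,dV_{b_\epsilon}$ and hence
\[
\operatorname{vol}(\Omega)=\int_{\overline U\setminus U_o}\sqrt{1+V^2|Df|^2}\,dV_{b_\epsilon}.
\]
The integrand is $\geq 1$, so the lower bound $\operatorname{vol}(\overline U\setminus U_o)\leq\operatorname{vol}(\Omega)$ is immediate, and the whole task reduces to an upper bound on the excess $E:=\int_{\overline U\setminus U_o}\big(\sqrt{1+V^2|Df|^2}-1\big)\,dV_{b_\epsilon}$.

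Next I would organise the estimate around the level $h_o$, splitting $\Omega$ into the inner cap $\{f<h_o\}$ and the outer collar $\{f>h_o\}$. Two elementary facts drive everything: first, along a level set the graph is horizontal, so $\Sigma_h$ carries the same $(n-1)$-area for $g$ and $b_\epsilon$ and $\mathcal V(h)$ may be used in either metric; second, writing $w=V|Df|$ one has the pointwise bound $\tfrac{\sqrt{1+V^2|Df|^2}-1}{|Df|}=V\,\tfrac{\sqrt{1+w^2}-1}{w}\leq V$. By the coarea formula,
\[
E=\int_{\min f}^{\max f}\int_{\Sigma_h}\frac{\sqrt{1+V^2|Df|^2}-1}{|Df|}\,dA_h\,dh .
\]

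On the outer collar $[h_o,\max f]$ the range of integration is controlled by the height estimate of Lemma \ref{heightbound}, while $\int_{\Sigma_h}V\,dA_h$ is controlled by $|\Sigma|$ up to a constant, the level sets being nested and outer-minimizing so that $|\Sigma_h|\leq|\Sigma|$. Substituting \eqref{pppp1} (resp.\ \eqref{pppp2}) then yields the first term $\mathbf m^{1/2}$ (resp.\ $\mathbf m^{1/(n-2)}(|\log\mathbf m|+|\Sigma|)$). For the inner cap I would combine two complementary bounds. Using the crude estimate $\leq V$ and the defining property of $h_o$, namely $\mathcal V(h)\leq\mathcal V(h_o)\lesssim\mathbf m^{1/n_\epsilon}$ for $h\leq h_o$, integration over $[\min f,h_o]$ produces the vertical term $V_{max}(h_o-\min f)\,\mathbf m^{1/n_\epsilon}$. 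Separately, dropping the nonnegative base volume, $\operatorname{vol}_g(\{f<h_o\})$ is bounded by the isoperimetric inequality (with constant $C_{iso}$) in terms of its boundary area $|\Sigma_o|+\mathcal V(h_o)$; since the quasi-local Penrose inequality \eqref{penrose.ineq} of Lemma \ref{lemma2.2} forces $|\Sigma_o|\lesssim\mathbf m^{1/n_\epsilon}$, this contributes $C_{iso}\big(\mathbf m^{1/n_\epsilon}\big)^{n/(n-1)}\sim\mathbf m^{2n/(n-1)}$ once the uniform exponent $n_\epsilon=\tfrac12$ is used for this piece.

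The hardest part is the inner region near $\Sigma_o$, where $|Df|\to\infty$ makes the integrand in $E$ blow up; taming it rests precisely on feeding the Penrose bound for $|\Sigma_o|$ into the isoperimetric inequality, and on verifying, via the lower semicontinuity and monotonicity of $\mathcal V$, that $\mathcal V(h)$ really stays of order $\mathbf m^{1/n_\epsilon}$ throughout $[\min f,h_o]$. A secondary bookkeeping point is that in the higher-dimensional hyperbolic case the height estimate \eqref{pppp2} carries $n_\epsilon=(n-2)/(n-1)$ while the cap estimate is run with $n_\epsilon=\tfrac12$, so each of the three contributions must be assembled with its correct exponent; absorbing all remaining constants into $C=C(C_{iso},n,|\Sigma|,C_{n,\epsilon},\xi)$ then gives the two stated inequalities.
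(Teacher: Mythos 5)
Your proposal is correct and follows essentially the same route as the paper: split at the level $h_o$, use the coarea formula with the pointwise bound $\sqrt{1+V^2|Df|^2}\leq 1+V|Df|$ (equivalently your $\bigl(\sqrt{1+w^2}-1\bigr)/w\leq 1$), control the cap via the definition of $h_o$, the isoperimetric inequality and the Penrose bound on $|\Sigma_o|$, and control the collar via the height estimate of Lemma \ref{heightbound}. Working with the excess $E$ rather than the two sub-volumes is only a cosmetic reorganization, and your bookkeeping of the exponents matches the paper's.
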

\begin{proof}Consider the compact graph $(\Omega,g)$. We define the region below $h_o$ in $\Omega$ by $\Omega_{h_o}=\{(x,f(x))\in\Omega: f(x)\leq h_o\}$ and $\Omega_{h_o}^c=\Omega\setminus\Omega_{h_o}$. Then 
\begin{equation}\label{p1}
   \operatorname{vol}(\overline{U}\backslash U_o)\leq  \operatorname{vol}(\Omega)\leq \operatorname{vol}(\Omega_{h_o})+\operatorname{vol}(\Omega^{c}_{h_o})\,.
\end{equation}Consider a sequence $\{h_i^-\}$ of regular values such that $h^-_i\leq h_o$ and $h_i^- \uparrow h_o$. Then using coarea formula, we have 
\begin{equation}
\begin{split}
\operatorname{vol}(\Omega_{h_o})&=\lim_{h_i^-\to h_o} \operatorname{vol}(\Omega_{h_i^-})=\lim_{h_i^-\to h_o}\int_{\Omega_{h_i^-}}dV_{g}=\lim_{h_i^-\to h_o}\int_{\mathcal{U}_{h_i^-}}\sqrt{1+V^2|Df|^2}dV_{b_\epsilon}\\
&\leq \lim_{h_i^-\to h_o}\int_{\mathcal{U}_{h_i^-}}\left(1+V|Df|\right)dV_{b_\epsilon}\leq \lim_{h_i^-\to h_o}\left(\operatorname{vol}(\mathcal{U}_{h_i^-})+V_{max}\int_{\mathcal{U}_{h_i^-}}|Df|dV_{b_\epsilon}\right)\\
&=\lim_{h_i^-\to h_o}\left(\operatorname{vol}(\mathcal{U}_{h_i^-})+V_{max}\int_{\text{min}f}^{h_i^-}\mathcal{V}(h)\,dh\right)\,,
\end{split}
\end{equation}where $\mathcal{U}_{h_i^-}=\{x\in \overline{U}\backslash U_o: f(x)<h_i^-\}$ and $V_{\max}=\max_{h\leq h_o}V$. Recall the isoperimetric inequality for compact domain $M$ with boundary $\partial M$ is \cite[equation 3.1.4.] {SchoenYau94}
\begin{equation}\label{iso}
\operatorname{vol}(M)\leq C_{iso} \operatorname{vol}(\partial M)^{\frac{n}{n-1}}\,,
\end{equation}where $C_{iso}$ depends on geometry of $M$ and $n$. Therefore, using the outer-minimizing condition of level sets, we have
\begin{equation}
\operatorname{vol}(\mathcal{U}_{h_i^-})\leq C_{iso}\left(|\Sigma_0|+\mathcal{V}(h_o)\right)^{\frac{n}{n-1}}\leq C_{iso}\left(2\mathcal{V}(h_o)\right)^{\frac{n}{n-1}}\,,
\end{equation}which leads to
\begin{equation}\label{p2}
\begin{split}
\operatorname{vol}(\Omega_{h_o})&\leq C_{iso}\left(2\mathcal{V}(h_o)\right)^{\frac{n}{n-1}}+V_{max}\left(h_o-\min f\right)\max_{h\leq h_0} \mathcal{V}(h)\\
&\leq 2 C_{iso}4^{\frac{n}{n-1}}(1+\xi)^{\frac{n}{n_\epsilon (n-1)}} \mathcal{A}_{n-1}^{\frac{n}{n-1}}\left(2c^{-1}_\epsilon \mathbf{m}\right)^{\frac{n}{n_\epsilon (n-1)}}\\
&+V_{max}\left(h_o-\min f\right)2(1+\xi)^{\frac{1}{n_\epsilon}} \mathcal{A}_{n-1}\left(2c^{-1}_\epsilon \mathbf{m}\right)^{\frac{1}{n_\epsilon}}\,.
\end{split}
\end{equation}Similarly, consider a sequence of regular values $h_i^+\downarrow h_o$ and following similar steps as above we obtain
\begin{equation}\label{p3}
\begin{split}
\operatorname{vol}(\Omega_{h_o}^c)&\leq \operatorname{vol}(\overline{U}\backslash U_o)+|\Sigma|\left(\max f-h_o\right)\,.
\end{split}
\end{equation}
Combing equations \eqref{p1}, \eqref{p2}, \eqref{p3} with Lemma \ref{heightbound} we achieve the estimate.
\end{proof}

\section{Proof of the Theorem \ref{main result2}}
In this section, we will present the main result of this paper. Given a Riemannian manifold $M$, it is essential to recall that a submanifold $N$ can be viewed as an integral current $T$ of multiplicity one. The boundary $\partial N$ is also an integral current in this case. Furthermore, the mass of a submanifold $N$ is denoted by $\textbf{M}(N)$ which is simply the volume of the submanifold $N$.

\begin{definition}\label{flat distance}
    Let $U_o\subset U$ be an open sets in $\mathbb{M}^{n}_\epsilon$, and let $T_1$ and $T_2$ be integral k-currents in $Z= (a,b)\times (\overline{U}\backslash U_o)$, and let $\bold{M}_Z$ denote the mass of a current in $Z$, the flat distance between $T_1$ and $T_2$ in Z is defined as 
    \begin{equation}
        \begin{aligned}
            d_F^Z(T_1, T_2)= \inf\{\bold{M}_Z(A) + \bold{M}_Z(B) : T_1\,\backslash\,T_2 = A +\partial B\},
        \end{aligned}
    \end{equation}
    where the infimum is taken over all integral $k$-currents $A$, and all integral currents $B$ in $Z=(a,b)\times (\overline{U}\backslash U_o)$, such that $T_1\,\backslash\,T_2=A+\partial B$.
\end{definition}

\begin{theorem}\label{main estimates}
    Let $(\Omega,g)$ be a compact (locally) hyperbolic graphical manifold with scalar curvature $R(g)\geq -n(n-1)$. Then 
    \begin{equation}
        \begin{aligned}
             d_F^{\Rr\times (\overline{U}\backslash U_o)}(\Omega, \{h_o\}\times (\overline{U}\backslash U_o))\leq C\mathbf{m}^\gamma\,,
        \end{aligned}
    \end{equation}for some $\gamma>0$ and $C$ depends on $C_{n,\epsilon}$, $\mathcal{A}_{n-1}$, $n_\epsilon$, $c_\epsilon$, $|\Sigma|$, $\operatorname{vol}\left(\overline{U}\backslash U_o\right)$, and $C_{iso}$.
\end{theorem}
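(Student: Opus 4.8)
The plan is to bound the flat distance between the graph current $\Omega$ and the horizontal slice $\{h_o\}\times(\overline{U}\backslash U_o)$ by splitting $\Omega$ according to the level $h_o$ and controlling each piece separately. I would first write $\Omega = \Omega_{h_o}\cup\Omega_{h_o}^c$ as in the proof of Lemma \ref{volume estimates}, and similarly decompose the target slice. The key structural fact is that the flat distance is controlled by filling volumes: if one can exhibit an integral current $B$ whose boundary (modulo a small-mass remainder $A$) realizes the difference $\Omega\,\backslash\,(\{h_o\}\times(\overline{U}\backslash U_o))$, then $d_F\le \mathbf{M}(A)+\mathbf{M}(B)$. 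The natural filling is the region swept out between the graph and the flat slice, i.e. the solid region $\{(t,x): x\in\overline{U}\backslash U_o,\ \min(h_o,f(x))\le t\le \max(h_o,f(x))\}$, whose mass is exactly $\int_{\overline{U}\backslash U_o}|f(x)-h_o|\,dV_{b_\epsilon}$ up to the ambient volume factor.

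The heart of the estimate is therefore to bound $\int_{\overline{U}\backslash U_o}|f-h_o|\,dV_{b_\epsilon}$ by splitting into the region $\{f\le h_o\}$ and $\{f> h_o\}$. On the upper region $\{f>h_o\}$ I would use the pointwise height bound from Lemma \ref{heightbound}, which gives $0\le \max f - h_o\le C_{n,\epsilon}|\Sigma|^{1/4}\mathbf{m}^{1/2}$ (resp. the $n\ge 4$ version), so that $|f-h_o|$ is uniformly small there; multiplying by $\operatorname{vol}(\overline{U}\backslash U_o)$ yields a term of order $\mathbf{m}^{\gamma}$. On the lower region $\{f\le h_o\}$ the pointwise height is not small, but its measure is controlled: by definition of $h_o$ the level-set volume satisfies $\mathcal{V}(h_o)\le 2(1+\xi)^{1/n_\epsilon}\mathcal{A}_{n-1}(2c_\epsilon^{-1}\mathbf{m})^{1/n_\epsilon}$, so the horizontal extent of $\Omega_{h_o}$ is tiny, and the contribution of $\{f\le h_o\}$ to the filling is bounded by $(h_o-\min f)\cdot\max_{h\le h_o}\mathcal{V}(h)$, which is again $O(\mathbf{m}^{1/n_\epsilon})$ up to the factor $(h_o-\min f)$ exactly as in Lemma \ref{volume estimates}. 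Collecting these, together with the remainder current $A$ accounting for the boundary mismatch on $\Sigma$ and $\Sigma_o$ (whose mass is controlled by the same level-set volume bounds), gives the claimed bound with $\gamma=\min\{1/2,\ 1/n_\epsilon,\ \tfrac{1}{n-2}\}$ or the appropriate minimum of the exponents appearing in Lemma \ref{volume estimates}.

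The main obstacle, I expect, is organizing the current-theoretic bookkeeping cleanly: one must verify that the swept region is genuinely an integral current whose boundary equals $\Omega\,\backslash\,(\{h_o\}\times(\overline{U}\backslash U_o))$ up to a controlled remainder supported near the boundaries $\Sigma$ and $\Sigma_o$, and confirm that this remainder $A$ (carrying the lateral faces of the filling and the level-set $\Sigma_{h_o}$) has mass bounded by the level-volume estimates rather than by the full $\operatorname{vol}(\overline{U}\backslash U_o)$. Because $f$ may fail to be smooth at critical points, I would work with regular values $h_i^\pm\to h_o$ and pass to the limit using lower semicontinuity of mass and the coarea formula, just as in the proof of Lemma \ref{volume estimates}, rather than attempting a direct pointwise argument. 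Once the filling and remainder masses are both shown to be $O(\mathbf{m}^{\gamma})$, the definition of $d_F^Z$ in Definition \ref{flat distance} immediately yields the theorem.
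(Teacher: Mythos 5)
Your proposal is correct and follows essentially the same route as the paper: the paper decomposes the mismatch $\Omega\,\backslash\,(\{h_o\}\times(\overline{U}\backslash U_o))$ into the two swept regions $B^{\pm}$ above and below the level $h_o$ plus the two lateral faces $A_{\pm}$ over $\partial U$ and $\partial U_o$, bounds $B^+$ and $A_+$ via the height estimate of Lemma \ref{heightbound}, and bounds $B^-$ and $A_-$ via the definition of $h_o$ combined with the isoperimetric inequality and the Penrose inequality of Lemma \ref{lemma2.2}, exactly as you outline. The only minor slip is that your bound $(h_o-\min f)\cdot\max_{h\le h_o}\mathcal{V}(h)$ for the lower filling should pass through the isoperimetric inequality (as in Lemma \ref{volume estimates}), since $\mathcal{V}(h)$ is the $(n-1)$-volume of the level set $\Sigma_h$ rather than the $n$-volume of the sublevel region $\mathcal{U}_h$.
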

\begin{proof}To estimate the distance between $\Omega$ and $\{h_o\}\times (\overline{U}\backslash U_o)$, we divide the region between these two submanifolds in $\mathbb{M}^{n+1}_{\epsilon}$ into the following four sets
    \begin{equation}
        \begin{aligned}
            B^+:&=\{(x,s)\in (\overline{U}\backslash {U}_o)\times \mathbb{R}|\, h_o\leq s\leq \max f \},\\
            B^-:&=\{(x,s)\in (\overline{U}\backslash {U}_o)\times \mathbb{R}|\, \min f\leq s\leq h_o \},\\
            A_+:&=\{(x,s)\in \partial U\times \mathbb{R}|\, h_o\leq s\leq \max f \},\\
            A_-:&=\{(x,s)\in \partial U_o\times \mathbb{R}|\, \min f\leq s\leq h_o \}\,.
        \end{aligned}
    \end{equation}
Therefore, $\Omega\,\backslash\,\left(\{h_o\}\times (\overline{U}\backslash U_o)\right)=\partial(B_++B_-)+A_-+A_+$. Note that $A_+\cap A_-=\emptyset$ and $B_+\cap B_-=\emptyset$, so we have $\mathbf{M}(B_+\cup B_-)=\mathbf{M}(B_+)+\mathbf{M}(B_-)$ and $\mathbf{M}(A_+\cup A_-)=\mathbf{M}(A_+)+\mathbf{M}(A_-)$. For any regular value $h\leq h_o$, by definition of $h_o$ we have
\begin{equation}
    \begin{aligned}
        \mathcal{V}(h)\leq 2(1+\xi)^{\frac{1}{n_\epsilon}} \mathcal{A}_{n-1}\left(2c^{-1}_\epsilon \mathbf{m}\right)^{\frac{1}{n_\epsilon}}.
    \end{aligned}
\end{equation}
By the isoperimetric inequality \eqref{iso}, we have 
\begin{equation}
    \begin{aligned}
        \mathbf{M}(B_-)&\leq \int_{\min f}^{h_o}\operatorname{vol}(\mathcal{U}_h)\, dh\\
        &\leq C_{iso}\int_{\min f}^{h_o}\left(2\mathcal{V}(h)\right)^{\frac{n}{n-1}}\, dh\\
        &\leq 2C_{iso}(h_o-\min f)4^{\frac{n}{n-1}}(1+\xi)^{\frac{n}{n_\epsilon (n-1)}} \mathcal{A}_{n-1}^{\frac{n}{n-1}}\left(2c^{-1}_\epsilon \mathbf{m}\right)^{\frac{n}{n_\epsilon (n-1)}},
    \end{aligned}
\end{equation}where the second one follows from that fact that $|\partial U_o|=|\Sigma_o|\leq \mathcal{V}(h)\leq\mathcal{V}(h_o)$. Similarly, we have  
\begin{equation}
    \begin{aligned}
        \mathbf{M}(B_+)&\leq \left(\max f- h_o\right)\operatorname{vol}\left(\overline{U}\backslash U_o\right)\,.
    \end{aligned}
\end{equation}Combining this with Lemma
\ref{heightbound}, then for compact locally hyperbolic graphical manifold  with $n\geq 3$ and three-dimensional compact hyperbolic graphical manifold 
\begin{equation}
    \begin{aligned}
        \mathbf{M}(B_+)&\leq C_{n,\epsilon}\operatorname{vol}\left(\overline{U}\backslash U_o\right)|\Sigma|^{\frac{1}{4}}\mathbf{m}^\frac{1}{2}\,,
    \end{aligned}
\end{equation} and for
compact hyperbolic graphical manifold with $n\geq 4$ we have
\begin{equation}
    \begin{aligned}
        \mathbf{M}(B_+)&\leq C_{n,\epsilon}\operatorname{vol}\left(\overline{U}\backslash U_o\right)\mathbf{m}^\frac{1}{n-1}\left(|\log\mathbf{m}|+|\Sigma|\right)\,.
    \end{aligned}
\end{equation}
Next, we estimate mass of $A_+$ and $A_-$.
\begin{equation}
\begin{split}
\mathbf{M}(A_-)&\leq (h_o-\min f)|\Sigma_o|\\
&\leq (h_o-\min f)\mathcal{A}_{n-1}\left(2c_{\epsilon}^{-1}\mathbf{m}\right)^{\frac{1}{n_\epsilon}}\,,
\end{split}
\end{equation}where we used the Riemannian Penrose inequality, Lemma \ref{lemma2.2}. Similarly, since every level set is outer-minimizing we get 
\begin{equation}
\mathbf{M}(A_+)\leq (\max f-h_o)|\Sigma|\,.
\end{equation}
Together with Lemma \ref{heightbound}, for compact locally hyperbolic graphical manifold  with $n\geq 3$ and three-dimensional compact hyperbolic graphical manifold we obtain
\begin{equation}
\begin{split}
\mathbf{M}(A_+)&\leq (\max f-h_o)|\Sigma|\leq  C_{n,\epsilon}|\Sigma|^{\frac{5}{4}}\mathbf{m}^{\frac{1}{2}}\,,
\end{split}
\end{equation}
and for
compact hyperbolic graphical manifold with $n\geq 4$ we obtain
\begin{equation}
\begin{split}
\mathbf{M}(A_+)&\leq C_{n,\epsilon}|\Sigma|\mathbf{m}^{\tfrac{1}{n-2}}\left(|\log\mathbf{m}|+|\Sigma|\right)\,,
\end{split}
\end{equation}
The result follows from these estimates and  
\begin{equation}
        \begin{aligned}
             d_F^{\Rr\times (\overline{U}\backslash U_o)}(\Omega, \{h_o\}\times (\overline{U}\backslash U_o))\leq \mathbf{M}(A_+)+\mathbf{M}(A_-)+\mathbf{M}(B_-)+\mathbf{M}(B_+)\,.
        \end{aligned}
    \end{equation}
\end{proof}

\begin{proof}[Proof of Theorem \ref{main result2}] After rescaling the graph such that $h_o$ is $t=0$ hypersurface in $\mathbb{M}_{\epsilon}^{n+1}$, the proof follows directly from Lemma \ref{volume estimates} and Theorem \ref{main estimates}.   
\end{proof}

\nocite{*}

\end{document}